\theoremstyle{plain}
\newtheorem{thm}[subsection]{Theorem}
\newtheorem{prop}[subsection]{Proposition}
\newtheorem{convention}[subsection]{Convention}
\theoremstyle{definition}
\newtheorem{defn}[subsection]{Definition}
\theoremstyle{remark}
\newtheorem{rem}[subsection]{Remark}
\numberwithin{equation}{section}
\newcommand{\sSet}{{ \mathsf{sSet} }}
\newcommand{\Mod}{{ \mathsf{Mod} }}
\newcommand{\ModR}{{ \mathsf{Mod}_\capR }}
\newcommand{\Alg}{{ \mathsf{Alg} }}
\newcommand{\LL}{{ \mathsf{L} }}
\newcommand{\RR}{{ \mathsf{R} }}
\newcommand{\TQ}{{ \mathsf{TQ} }}
\newcommand{\K}{{ \mathsf{K} }}
\newcommand{\coAlg}{{ \mathsf{coAlg} }}
\newcommand{\AlgJ}{{ \Alg_J }}
\newcommand{\coAlgK}{{ \coAlg_\K }}
\newcommand{\AlgO}{{ \Alg_\capO }}
\newcommand{\capO}{{ \mathcal{O} }}
\newcommand{\capR}{{ \mathcal{R} }}
\newcommand{\id}{{ \mathrm{id} }}
\newcommand{\Smash}{{ \,\wedge\, }}
\newcommand{\wequiv}{{ \ \simeq \ }}
\newcommand{\function}[3]{{ {#1}\colon\thinspace{#2}\rightarrow{#3} }}
\DeclareMathOperator*{\holim}{holim}
\DeclareMathOperator{\Hombold}{\mathbf{Hom}}
\title[Homotopy pro-nilpotent structured ring spectra]{Homotopy pro-nilpotent structured ring spectra and topological Quillen localization}
\author{Yu Zhang}
\address{Department of Mathematics, Nankai University, 94 Weijin Road, Nankai District, Tianjin, P.R.China 300071}
\email{zhang.4841@osu.edu}
\subjclass[2020]{13D03; 18M70; 55P43; 55P60}
\thanks{Keywords: (Co)homology of commutative rings and algebras; Algebraic operads and Koszul duality; Spectra with additional structure; Localization and completion in homotopy theory}
\begin{document}

\maketitle

\begin{abstract}

The aim of this paper is to show that homotopy pro-nilpotent structured ring spectra are $\TQ$-local, where structured ring spectra are described as algebras over a spectral operad $\capO$. Here, $\TQ$ is short for topological Quillen homology, which is weakly equivalent to $\capO$-algebra stabilization. An $\capO$-algebra is called homotopy pro-nilpotent if it is equivalent to a limit of nilpotent $\capO$-algebras.  Our result provides new positive evidence to a conjecture by Francis-Gaisgory on Koszul duality for general operads.  As an application, we simultaneously extend the previously known $0$-connected and nilpotent $\TQ$-Whitehead theorems to a homotopy pro-nilpotent $\TQ$-Whitehead theorem. 

\end{abstract}

\section{Introduction}

Spectra play a key role in the development of modern algebraic topology.  Lots of important examples of spectra, such as Eilenberg–Mac Lane spectra, bordism spectra and complex (or real) K-theory spectra, are equipped with natural algebraic structures. However, the algebraic structures on spectra are often more general than their classical analogs, such as commutative rings.
Spectra equipped with generalized algebraic structures are  called structured ring spectra.

We can formalize our definition of structured ring spectra as follows.  Let $\capR$ be any commutative monoid in the category of symmetric spectra of simplicial sets.  In other words, let $\capR$ be any commutative ring spectrum.  Structured ring spectra are spectra with extra algebraic structures that can be described as algebras over an operad $\capO$ in symmetric spectra, or more generally, in $\Mod_{\capR}$.  Here, we let $(\Mod_{\capR},\Smash,\capR)$ denote the symmetric monoidal category of $\capR$-modules.  For a fixed operad $\capO$, denote by $\AlgO$ the category of $\capO$-algebras.  For readers not familiar with operads, \cite{Berger_Moerdijk, Fresse_lie_theory, Fresse_Koszul_duality, Kriz_May, May, Rezk} are some useful references.   In this paper we work with reduced operads $\capO$ (i.e., such that $\capO[0]=*$, where $*$ denotes the trivial $\capR$-module); algebras over $\capO$ are then called non-unital.  This includes the examples of non-unital $E_n$ algebra spectra.

Topological Quillen homology \cite{Basterra, Basterra_Mandell_thh}, or $\TQ$-homology, is the topological analog of André–Quillen homology \cite{Blanc_Johnson_Turner_AQ, Goerss_f2_algebras, Quillen_rings} in the setting of non-unital structured ring spectra. We recall the precise definition of $\TQ$-homology below.

Fix an operad $\capO$ as above.  Let the operad $\tau_1\capO$ be the natural truncation of $\capO$ above level 1.  In particular,  $\tau_1\capO [1] = \capO [1]$ and $\tau_1\capO [k] = *$ for $k \geq 0, k \neq 1$. Then there is a canonical truncation map $\capO \rightarrow \tau_1\capO$ in the category of operads.  We can factor the truncation map as $\capO \rightarrow J \rightarrow \tau_1\capO$, a cofibration followed by a weak equivalence with respect to the projective model structure of operads, see  \cite[Definition 5.47, 7.10]{Harper_Hess} for more details.

The map $\capO \rightarrow J$ induces the
corresponding change of operads adjunction
\begin{equation}
\label{equation: QU adjunction}
\xymatrix{
\AlgO \ar@<.5ex>[r]^Q & \Alg_{J} \ar@<.5ex>[l]^U 
}
\end{equation}
with left adjoint on top, where $Q(X)=J\circ_\capO(X)$ and the forgetful functor $U$ is the restriction along the operad map $\capO \to J$.  Here $\circ$ denotes composition product, $J\circ_\capO(X) = coeq(J \circ \capO \circ X \rightrightarrows  J \circ X)$, see, for example,   \cite[Definition 2.8]{Harper_bar_constructions}.

\begin{convention}
\label{convention: positive flat stable}
Throughout this paper, we work with the positive flat stable model structure (see, for example \cite[7.15]{Harper_Hess}) on $\AlgO$ and $\AlgJ$ unless otherwise specified.  A map of $\capO$-algebras is called a (co)fibration if it is so with respect to the positive flat stable model structure on $\AlgO$.  Similarly, an $\capO$-algebra is called (co)fibrant if it is so with respect to the positive flat stable model structure on $\AlgO$. 
\end{convention}

\begin{rem}
The category $\Alg_{J}$ of $J$-algebras is Quillen equivalent to the category of $\capO [1]$-modules \cite[7.21]{Harper_Hess}.  One can think of $J$ as a fattened-up version of $\tau_1\capO$.  The advantage of working with $J$ instead of $\tau_1\capO$ will become clear after we introduce the $\TQ$-completion construction (Definition \ref{def: tq completion}).  See Remark \ref{rem: J preserve cofibrancy}.
\end{rem}

\begin{defn}\label{def: tq homology}
Let $X$ be an $\capO$-algebra. The \emph{topological Quillen homology} (or $\TQ$-homology, for short) of $X$ is
$$\TQ(X):=\RR U(\LL Q(X))$$
the $\capO$-algebra defined via the indicated composite of total right and left derived functors.  
Note the forgetful functor $U$ preserves all weak equivalences.  Therefore, if $X$ is cofibrant, then $\TQ(X) \simeq UQ(X)$ and the unit of the $(Q,U)$ adjunction in \eqref{equation: QU adjunction} is the $\TQ$-Hurewicz map $X \rightarrow{UQX}$ of the form $X \rightarrow\TQ(X)$.

\end{defn}

$\TQ$-homology has been shown to enjoy several properties analogous to the ordinary homology of spaces; see, for instance, \cite{Basterra, Basterra_Mandell, Harper_Hess}. Furthermore, it turns out that $\TQ$-homology is weakly equivalent to stabilization $\Omega^\infty\Sigma^\infty$ in the category of $\capO$-algebras \cite{Basterra_Mandell, Ching_Harper_derived_Koszul_duality, Kuhn, Pereira_spectral_operad}.  One can think of the adjunction \eqref{equation: QU adjunction} as an analog of suspension spectrum and infinite loop space adjunction $(\Sigma^\infty, \Omega^\infty)$.

Let $K:= QU$ denote the comonad associated to the adjunction $(Q, U)$.  Then the image of $Q$ lands in the category of $K$-coalgebras.  Moreover, there is an associated adjunction of $\infty$-categories \cite[1.3]{Ching_Harper_derived_Koszul_duality}
\begin{equation}
\label{equation: koszul adjunction}
\xymatrix{
\AlgO \ar@<.5ex>[r] & \coAlgK \ar@<.5ex>[l] 
}
\end{equation}
where the left adjoint is $Q$.

Francis-Gaitsgory \cite{Francis_Gaitsgory} studied analogous phenomena in terms of Koszul duality of general operads.  They made a conjecture, which we can rephrase in terms of structured ring spectra as follows.

\subsection*{The Francis-Gaitsgory Conjecture \cite[3.4.5]{Francis_Gaitsgory}}
\textit{Adjunction \eqref{equation: koszul adjunction} induces an equivalence of homotopy categories after restricting $\AlgO$ to the full subcategory of homotopy pro-nilpotent $\capO$-algebras.}

We recall relevant definitions below.

\begin{defn}
\label{defn: homotopy_pro_nilpotent}
Let $X$ be an $\capO$-algebra and $M \geq 2$.  We say that $X$ is \emph{$M$-nilpotent} if all the $M$-ary and higher operations $ \capO[t] \wedge X^{\wedge t} \rightarrow X$ of $X$ are trivial (i.e., if these maps factor through the trivial $\capR$-module ${*}$ for each $t\geq M$).  An $\capO$-algebra is called \emph{nilpotent} if it is $M$-nilpotent for some $M\geq 2$. An $\capO$-algebra is \emph{homotopy pro-nilpotent} if it is weakly equivalent to the homotopy limit of a small diagram of nilpotent $\capO$-algebras.
\end{defn}

Some special cases of the Francis-Gaitsgory conjecture have been proved.

If $\capO$ is truncated, meaning there exists some large enough $n$ such that $\capO [k] = *$ for all $k \geq n$, then the conjecture has been proved by Heuts \cite[6.9]{Heuts_goodwillie_approximation}.  In this special case, all $\capO$-algebras are nilpotent.

For a general operad $\capO$, Ching-Harper \cite[1.2]{Ching_Harper_derived_Koszul_duality} proved that adjunction \eqref{equation: koszul adjunction} induces an equivalence of homotopy categories after restricting to $0$-connected objects on both sides, under the assumption that $\capR$ and $\capO [k]$ for each $k$ are all $(-1)$-connected.  Here we say an $\capO$-algebra $X$ is $0$-connected if the homotopy groups $\pi_k X$ of the underlying spectra are trivial for all $k \leq 0$.

If an $\capO$-algebra $X$ is $0$-connected, then $X$ is homotopy pro-nilpotent.  This is because the homotopy completion tower of $X$ converges strongly to $X$ \cite[1.12]{Harper_Hess}.  Hence, the result of Ching-Harper partially solves the Francis-Gaitsgory conjecture.  The general question for homotopy pro-nilpotent objects remains open.  This is the reason why the main result of \cite{Amabel_koszul} has $0$-connected assumptions.

\begin{rem}
In particular, if one take $\capO$ to be an $E_n$ operad in $\ModR$, the result of Ching-Harper \cite{Ching_Harper_derived_Koszul_duality} is related to the Koszul duality between  $E_n$-algebras and $E_n$-coalgebras, see also \cite{Ayala_Francis,Ching_Salvatore_Koszul,Fresse_Koszul_duality,Lurie_higher_algebra}.
\end{rem}

The unit of adjunction \eqref{equation: koszul adjunction} is shown \cite{Ching_Harper_derived_Koszul_duality} to be weakly equivalent to $\TQ$-completion (Definition \ref{def: tq completion}), which is an analog of Bousfield-Kan completion \cite{Bousfield_Kan} of spaces. Hence, for a general operad $\capO$, to prove the ``unit side'' of the Francis-Gaitsgory conjecture amounts to proving for each (cofibrant) homotopy pro-nilpotent $\capO$-algebra $X$, the $\TQ$-completion map $X \rightarrow X^{\wedge}_{\TQ}$ is a weak equivalence of $\capO$-algebras.  The following are results in this direction.

(1) The result of Ching-Harper \cite{Ching_Harper_derived_Koszul_duality} implies for each $0$-connected $\capO$-algebra $X$,  $X \rightarrow X^{\wedge}_{\TQ}$ is a weak equivalence.  Here $\capR$ and $\capO$ are assumed to be $(-1)$-connected.

(2) Ching-Harper \cite{Ching_Harper_nilpotent_Whitehead} proved for nilpotent $\capO$-algebra $X$, $X$ is a retract of $X^{\wedge}_{\TQ}$ in the homotopy category of $\capO$-algebras.

(3) Schonsheck \cite{Schonsheck_Fibration} proved that if $X$ is the homotopy fiber of a fibration $E \rightarrow B$ of $\capO$-algebras where both $E, B$ are $0$-connected, then $X \rightarrow X^{\wedge}_{\TQ}$ is a weak equivalence.  Here $\capR$ and $\capO$ are assumed to be $(-1)$-connected.

However, none of the known results could work for arbitrary
homotopy pro-nilpotent $\capO$-algebras.  In this paper, we take a different approach and work with $\TQ$-localization (Definition \ref{defn: tq local and localization}) in place of $\TQ$-completion.  Our main result is the following.

\begin{thm}
\label{thm:main_result}
Let $X$ be a homotopy pro-nilpotent $\capO$-algebra, then an arbitrary fibrant replacement of $X$ in $\AlgO$ is $\TQ$-local.
\end{thm}

\begin{rem}
\label{rem: fibrant replacement}
The appearance of fibrant replacement is due to our definition (Definition \ref{defn: tq local and localization}) that $\TQ$-local $\capO$-algebras are required to be fibrant (with respect to the positive flat stable model structure, see Convention \ref{convention: positive flat stable}).  If such $X$ is already fibrant, then $X$ is $\TQ$-local.
\end{rem}

Our result provides positive evidence to the Francis-Gaitsgory conjecture for arbitrary homotopy pro-nilpotent $\capO$-algebra $X$. Indeed, $X \rightarrow X^{\wedge}_{\TQ}$ is a weak equivalence if and only if (1) $X$ is $\TQ$-local, and (2) $X \rightarrow X^{\wedge}_{\TQ}$ is a $\TQ$-homology equivalence (Proposition \ref{prop: complete iff good and local}).  We have proved the first part for homotopy pro-nilpotent $\capO$-algebras, only the second half remains.

As an application of the main result, we obtain the following homotopy pro-nilpotent $\TQ$-Whitehead theorem that simultaneously extends the previously known $0$-connected and nilpotent $\TQ$-Whitehead theorems \cite{Ching_Harper_nilpotent_Whitehead,Harper_Hess}.

\begin{thm}
\label{thm: homotopy pro nilpotent whitehead}
A map $X\rightarrow Y$ between homotopy pro-nilpotent $\capO$-algebras is a weak equivalence if and only if it is a $\TQ$-homology equivalence.
\end{thm}

There are lots of important examples of $\capO$-algebras that are homotopy pro-nilpotent but are not nilpotent nor $0$-connected.  For example, in the context of Goodwillie calculus, the Taylor tower of the identity functor on $\AlgO$ always converges to homotopy pro-nilpotent $\capO$-algebras \cite[1.14]{Harper_Hess}.  But those $\capO$-algebras are not nilpotent nor $0$-connected in general. See also \cite{clark_goodwillie, Kuhn, Pereira_spectral_operad, Schonsheck_TQ_Taylor} for related discussions.

\subsection*{Organization of the paper}

In Section \ref{sec: defn completion and localization}, we review the basic setup for $\TQ$-completion and $\TQ$-localization.  We also recall the $\TQ|_{\mathrm{Nil}_M}$-completion construction, which will play a key role in our proof of the main result (Theorem \ref{thm:main_result}).

In Section \ref{sec: proof}, we prove Theorems \ref{thm:main_result} and \ref{thm: homotopy pro nilpotent whitehead}.  Along the way, we also discuss the relation between $\TQ$-completion and $\TQ$-localization (Proposition \ref{prop: complete iff good and local}).

\subsection*{Assumptions on the operad $\capO$}

We work in the category $\AlgO$ of algebras over an operad $\capO$ in $\Mod_{\capR}$, the category of $\capR$-modules, where $\capR$ is a commutative monoid in the category of symmetric spectra.  Throughout this paper, we assume that $\capO[0]=*$.  We also make a technical assumption that the natural maps $\capR\rightarrow\capO[1]$ and $*\rightarrow \capO[n]$ are flat stable cofibrations in $\capR$-modules for each $n \geq 0$; see, for instance, \cite[2.1, 6.12]{Ching_Harper_derived_Koszul_duality}.  This is the same cofibrancy condition that also appears in \cite{Ching_Harper_derived_Koszul_duality, Harper_Hess}.  This assumption does not limit the usage of our main result since, up to weak equivalence, any operad $\capO$ can be replaced by one that satisfies such conditions.  We do not need connectivity assumptions on $\capR$ and $\capO$.

\subsection*{Acknowledgments}
The author would like to thank John E. Harper and Niko Schonsheck for inspiring discussions and helpful suggestions.  The author would like to thank Michael Ching, Martin Frankland, Mark W. Johnson and Jérôme Scherer for helpful conversations. The author is grateful to Oscar Randal-Williams for detailed and helpful critical comments on an early draft of this paper. The author would like to thank the anonymous referee for detailed suggestions.   The author was supported in part by the Simons Foundation: Collaboration Grants for Mathematicians \#638247, and by the National Natural Science Foundation of China (No. 11871284; 12001474; 12261091; 12271183).

\section{$\TQ$-completion and $\TQ$-localization}
\label{sec: defn completion and localization}

In this section, we review the definitions of $\TQ$-completion and $\TQ|_{\mathrm{Nil}_M}$-completion.  We also recall the definitions of $\TQ$-localization and the $\TQ$-local homotopy theory on $\AlgO$.

We first recall the $\TQ$-completion construction \cite{Harper_Hess}.

Let  $Z$ be a cofibrant $\capO$-algebra.  Consider the cosimplicial resolution of $Z$ with respect to $\TQ$-homology of the form
\begin{equation}
\xymatrix{
Z \ar[r] & 
(UQ) Z \ar@<0.5ex>[r] \ar@<-0.5ex>[r]  & 
(UQ)^2 Z \ar@<-1.5ex>[l]   \ar@<1.0ex>[r] \ar[r] \ar@<-1.0ex>[r] & 
(UQ)^3 Z \ar@<-2.0ex>[l] \ar@<-3.0ex>[l] \cdots 
}
\end{equation}
in $\AlgO$, denoted $Z \rightarrow \mathbf{C} (Z)$, with coface maps obtained by iterating the $\TQ$-Hurewicz map $\id \rightarrow UQ$ (Definition \ref{def: tq homology}) and codegeneracy maps built from the counit map of the adjunction $(Q,U)$ in the usual way. 
Taking the homotopy limit (over $\Delta$) gives a map \cite{Ching_Harper_derived_Koszul_duality,Harper_Hess} of the form
\begin{equation}
\label{eq: tq_completion}
  Z \rightarrow
  Z^{\wedge}_{\TQ}=\holim\nolimits_\Delta \mathbf{C}(Z)
  \wequiv\holim\nolimits_\Delta\widetilde{\mathbf{C}(Z)}
\end{equation}
in $\AlgO$, where $\widetilde{\mathbf{C}(Z)}$ denotes any functorial fibrant replacement functor $\widetilde{(-)}$ on $\AlgO$ (obtained, for instance,  by running the small object argument with respect to the generating acyclic cofibrations in $\AlgO$) applied to the cosimplicial diagram $\mathbf{C}(Z)$.

\begin{defn}
\label{def: tq completion}
Let $Z$ be a cofibrant $\capO$-algebra.  The \emph{$\TQ$-completion} of $Z$ is the map $Z \rightarrow
  Z^{\wedge}_{\TQ}$ of $\capO$-algebras constructed above. 
\end{defn}

\begin{rem}
\label{rem: J preserve cofibrancy}
The construction of $J$ guarantees that both $U$ and $Q$ preserve cofibrant objects \cite[5.49]{Harper_Hess}. Hence, $UQ(Z) \simeq \TQ(Z), (UQ)^2(Z) \simeq (\TQ)^2 (Z), $ etc. This shows the $\TQ$-completion construction is homotopically well defined; weakly equivalent cofibrant $\capO$-algebras have weakly equivalent $\TQ$-completions.
\end{rem}

Next, we recall the $\TQ|_{\mathrm{Nil}_M}$-completion construction from \cite{Ching_Harper_nilpotent_Whitehead}.  This construction is very similar to the $\TQ$-completion construction.  However, it is only defined for $M$-nilpotent $\capO$-algebras.

For each $n\geq 1$, let $\tau_n\capO$ denote the operad associated to $\capO$ where
\begin{equation}
\label{equation: tau n O}
  (\tau_n \capO)[t] :=
    \begin{cases}
      \capO[t] & \text{for $t \leq n$}\\
      * &  \text{otherwise}
    \end{cases}       
\end{equation}
and consider the associated commutative diagram of operad maps
\cite{Ching_Harper_nilpotent_Whitehead}
\begin{equation}
\xymatrix{
\capO \ar@/_1pc/[dr]  \ar[r] & 
J_n \ar[d]^\sim  \ar[r]  & 
J_1 \ar[d]^\sim\ar@{=}[r]  & J\\
& \tau_n \capO \ar[r] & 
\tau_1 \capO
}
\end{equation}
 where the upper horizontal maps
 are cofibrations of operads, the left-hand and bottom horizontal maps are the natural truncations, and the vertical maps are weak equivalences of operads; for notational simplicity, here we take $J=J_1$. The corresponding change of operad adjunctions have the form
\begin{equation}
\xymatrix{
  \AlgO \ar@<.5ex>[r]^{R_n}  & 
  \Alg_{J_n} \ar@<.5ex>[l]^{V_n} \ar@<.5ex>[r]^{Q_n}  & 
  \Alg_{J} \ar@<.5ex>[l]^{U_n}
}\quad\quad
\xymatrix{
  \AlgO \ar@<.5ex>[r]^{Q}  & 
  \Alg_{J} \ar@<.5ex>[l]^{U}
}
\end{equation}
with left adjoints on top, where $R_n=J_n\circ_\capO(-)$, $Q_n=J\circ_{J_n}(-)$, $Q=J\circ_\capO(-)$, and $V_n,U_n,U$ denote the indicated forgetful functors; in particular, the adjunction on the right is the composite of the adjunctions on the left.

Let $n\geq 1$ and define $M:=n+1$. Let $X$ be a cofibrant $J_n$-algebra and consider the cosimplicial resolution of $X$ with respect to $\TQ|_{\mathrm{Nil}_M}$-homology of the form
\begin{equation}
\xymatrix{
  X \ar[r] &  (U_n Q_n) X \ar@<0.5ex>[r] \ar@<-0.5ex>[r]  &  (U_n Q_n)^2 X  \ar@<-1.5ex>[l]   \ar@<1.0ex>[r] \ar[r] \ar@<-1.0ex>[r] &  (U_n Q_n)^3 X \ar@<-2.0ex>[l] \ar@<-3.0ex>[l] ~ \cdots 
}
\end{equation}
in $\Alg_{J_n}$, denoted $X\rightarrow \mathbf{N}(X)$, with coface maps obtained by iterating the $\TQ|_{\mathrm{Nil}_M}$-Hurewicz map $\id \rightarrow U_nQ_n$ and codegeneracy maps built from the counit map of the adjunction $(Q_n,U_n)$ in the usual way. Applying the forgetful functor $V_n$ gives the diagram $V_n X \rightarrow V_n \mathbf{N} (X)$ of the form
\begin{equation}
\xymatrix{
  V_n X \ar[r] &  
  V_n(U_nQ_n) X\ar@<0.5ex>[r]\ar@<-0.5ex>[r] &  
  V_n(U_nQ_n)^2 X\ar@<-1.5ex>[l]\ar@<1.0ex>[r]
  \ar[r]\ar@<-1.0ex>[r] &  
  V_n (U_nQ_n)^3 X\ar@<-2.0ex>[l]\ar@<-3.0ex>[l]\cdots 
}
\end{equation}
in $\AlgO$.  Taking the homotopy limit (over $\Delta$) gives a map of the form
\begin{equation}
\label{eq:defn_of_TQ_Nil_M_completion}
  V_n X\rightarrow
  X^\wedge_{\TQ|_{\mathrm{Nil}_M}}=
  \holim\nolimits_\Delta V_n\mathbf{N}(X)
  \wequiv\holim\nolimits_\Delta\widetilde{V_n\mathbf{N}(X)}
\end{equation}
in $\AlgO$, where $\widetilde{V_n\mathbf{N}(X)}$ denotes any functorial fibrant replacement functor $\widetilde{(-)}$ on $\AlgO$ applied to the cosimplicial diagram $V_n\mathbf{N}(X)$.

\begin{defn}
\label{def: nilpotent completion}
Let $Z$ be an $M$-nilpotent $\capO$-algebra. Choose a cofibrant $J_n$-algebra $X$ such that $Z$ is weakly equivalent to $V_n X$ as $\capO$-algebras. The \emph{$\TQ|_{\mathrm{Nil}_M}$-completion}  of $Z$ is defined as the map $V_n X\rightarrow X^\wedge_{\TQ|_{\mathrm{Nil}_M}}$.
\end{defn}

\begin{rem}
\label{remark: Jn replace nilpotent}
The existence of $X$ in Definition \ref{def: nilpotent completion} is explained in \cite{Ching_Harper_nilpotent_Whitehead} (see the discussion following \cite[Proposition 2.8]{Ching_Harper_nilpotent_Whitehead}).  Moreover, we can make the choice of $X$ to be functorial, although we do not need the extra property in this paper.
\end{rem}

Next, we recall the definition of $\TQ$-localization, as well as the $\TQ$-local homotopy theory constructed in \cite{Harper_Zhang}.

\begin{defn}
\label{defn: tq acyclic strong cofibration}
Let $\function{f}{A}{B}$ be a map in $\AlgO$. We say that $f$ is a
\begin{itemize}
    \item \emph{$\TQ$-equivalence} if $f$ induces a weak equivalence $\TQ(A)\wequiv\TQ(B)$ on $\TQ$-homology.
    \item \emph{strong cofibration} if $f$ is a cofibration between cofibrant objects.
    \item \emph{$\TQ$-acyclic strong cofibration} if $f$ is a strong cofibration which is also a $\TQ$-equivalence.
    \item \emph{weak $\TQ$-fibration} if $f$ has the right lifting property with respect to every $\TQ$-acyclic strong cofibration.
\end{itemize}
\end{defn}

\begin{defn}
\label{defn: tq local and localization}
An $\capO$-algebra $X$ is called \emph{$\TQ$-local} if (i) $X$ is fibrant in $\AlgO$, and (ii) every $\TQ$-acyclic strong cofibration $A\rightarrow B$ induces a weak equivalence
\begin{align*}
\Hombold(A,X)\xleftarrow{\wequiv}\Hombold(B,X) 
\end{align*}
on mapping spaces in $\sSet$; here we are using the simplicial model structure on $\AlgO$ (see, for instance, \cite{Ching_Harper_derived_Koszul_duality, EKMM,  Goerss_Hopkins_moduli_problems, Goerss_Jardine, Harper_Hess}).
The \emph{$\TQ$-localization} of $X$ is a map $\function{l}{X}{L_\TQ(X)}$ in $\AlgO$ such that (i) $l$ is a $\TQ$-equivalence, and (ii) $L_\TQ(X)$ is $\TQ$-local.
\end{defn}

\begin{prop}\cite[5.14]{Harper_Zhang}
\label{prop: tq local semi model}
The category $\AlgO$ with the three distinguished classes of maps (i) $\TQ$-equivalences, (ii) weak $\TQ$-fibrations, and (iii) cofibrations (Convention \ref{convention: positive flat stable}), has the structure of a (left) semi-model category in the sense of Goerss-Hopkins \cite[1.1.6]{Goerss_Hopkins_moduli_problems}. 
\end{prop}

For us, the main difference of working with the semi-model structure compared to full model structures is that (1) we often need to work with strong cofibrations instead of arbitrary cofibrations, and (2) the factorization axiom for the semi-model structure only provides functorial fibrant replacements for cofibrant objects.

\begin{rem}
The $\TQ$-local homotopy theory only results in a semi-model structure instead of a full model structure because the model structure on $\AlgO$ (recall Convention \ref{convention: positive flat stable}) is almost never left proper, in general (e.g., associative ring spectra are not left proper); see, for instance, \cite[2.10]{Rezk_proper}. 
\end{rem}

The following proposition will be useful for detecting $\TQ$-local $\capO$-algebras.

\begin{prop}\cite[5.16]{Harper_Zhang}
\label{prop: local iff weak fibrant}
An $\capO$-algebra $X$ is $\TQ$-local if and only if the map $X \rightarrow *$ is a weak $\TQ$-fibration.  
\end{prop}

Consequently, the functorial factorization of $\TQ$-local semi-model structure gives functorial $\TQ$-localization for cofibrant $\capO$-algebras \cite[5.17]{Harper_Zhang}.

\section{Homotopy pro-nilpotent $\capO$-algebras are $\TQ$-local}
\label{sec: proof}

In this section, we discuss the relation between $\TQ$-completion and $\TQ$-localization (Proposition \ref{prop: complete iff good and local}).  After that, we will use a similar strategy to study $\TQ|_{\mathrm{Nil}_M}$-completion and show that fibrant nilpotent $\capO$-algebras are $\TQ$-local (Proposition \ref{prop: nilpotent local}).  Then, we can prove the main result (Theorem \ref{thm:main_result}).  As an application, we will also discuss the homotopy pro-nilpotent $\TQ$-Whitehead theorem (Theorem \ref{thm: homotopy pro nilpotent whitehead}).

$\TQ$-localization enjoys most nice properties possessed by general (left) Bousfield localizations.  However, we do want to be careful since the $\TQ$-local structure (Proposition \ref{prop: tq local semi model}) is only a semi-model structure instead of a full model structure. We list some useful properties below.  Some good references for general localization techniques include \cite{Bousfield_localization_spaces, Bousfield_Kan,Dror_Farjoun_LNM, Dwyer_localizations,  Hirschhorn, May_Ponto, Sullivan_mit_notes}. 

\begin{prop}
\label{prop: basic props of localization}
(1) A map $X\rightarrow Y$ between $\TQ$-local $\capO$-algebras is a weak equivalence if and only if it is a $\TQ$-homology equivalence.

(2) If $X$ and $Y$ are fibrant $\capO$-algebras that are weakly equivalent, then $X$ is $\TQ$-local if and only if $Y$ is $\TQ$-local.

(3) The homotopy limit of a small diagram of $\TQ$-local $\capO$-algebras is $\TQ$-local.

\end{prop}

\begin{proof}
(1) and (2) are standard facts about localization; see, for instance, Hirschhorn \cite[3.2.13, 3.2.2]{Hirschhorn}.
(3) is also a standard result for left Bousfield localization.  We spell out the details here to show the proof still works when the $\TQ$-local homotopy theory only has a semi-model structure.  

Note the $\TQ$-local semi-model structure has strictly less fibrations compared to the original model structure on $\AlgO$.  It follows from (1) that the homotopy limit in $\AlgO$ of a small diagram of $\TQ$-local $\capO$-algebras is weakly equivalent to its homotopy limit calculated in the $\TQ$-local semi-model structure. Moreover, the diagram is already objectwise fibrant with respect to the $\TQ$-local semi-model structure by Proposition \ref{prop: local iff weak fibrant}.   Hence, the result follows from the fibrancy property of homotopy limits in a homotopy theory (in this case, in the $\TQ$-local homotopy theory); see, for instance, Hirschhorn \cite[18.5.2]{Hirschhorn}, together with Ching-Harper \cite[8.9]{Ching_Harper_derived_Koszul_duality} for a discussion of homotopy limits in the context of $\capO$-algebras.
\end{proof}

For instance,  let $f: X \rightarrow Y$ be a map between $\TQ$-local $\capO$-algebras.  It follows from Proposition \ref{prop: basic props of localization} (3) that the homotopy fiber of $f$ is also $\TQ$-local. This is not expected to be true, in general, if we replace ``$\TQ$-local'' with ``$\TQ$-complete'' (Definition \ref{defn: tq good and tq complete}), and is one of the reasons why $\TQ$-localization is often better behaved than $\TQ$-completion.  See \cite{Schonsheck_Fibration} for related discussions.

The following proposition gives our first examples of $\TQ$-local $\capO$-algebras.

\begin{prop}
\label{prop: u_local}
Let $Y$ be a fibrant object in $\Alg_{J}$.  Then $UY \in  \Alg_{\capO}$ is $\TQ$-local.  Here, $U$ is the right adjoint of adjunction \eqref{equation: QU adjunction}.
\end{prop}

\begin{proof}
By proposition \ref{prop: local iff weak fibrant}, it suffices to show $UY \to *$ has the right lifting property with respect to every $\TQ$-acyclic strong cofibration $i: A \to B$.  Using the $(Q,U)$ adjunction \eqref{equation: QU adjunction}, it is equivalent to show $Y \to *$ has the right lifting property with respect to $Qi: QA \to QB$ in $\Alg_{J}$.  This is certainly true since $Y$ is fibrant and $Qi: QA \to QB$ is an acyclic cofibration in $\Alg_{J}$. 
\end{proof}

The following generalization of Proposition \ref{prop: u_local} will be used in our proof of Propositions \ref{prop: completion is local} and \ref{prop: nil completion local}.

\begin{prop}
\label{prop:fibrant_replacements_of_UY_are_TQ_local} 
Let $Y$ be any object in $\Alg_{J}$, then every fibrant replacement of $UY$ in $\AlgO$ is $\TQ$-local.
\end{prop}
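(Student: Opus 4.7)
The plan is to bootstrap from Proposition \ref{prop:u_local} (which handles the case when the object in $\Alg_J$ is already fibrant) to the general case, by passing to a fibrant replacement inside $\Alg_J$ and then invoking Proposition \ref{prop: generalized we local} (which says that being $\TQ$-local transfers across weak equivalences, at the level of fibrant replacements).

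Concretely, first I would factor the map $Y\rightarrow *$ in $\Alg_J$ as an acyclic cofibration $Y\rightarrow Y'$ followed by a fibration $Y'\rightarrow *$, so that $Y'$ is a fibrant replacement of $Y$ in $\Alg_J$. Next, I would observe that the adjunction \eqref{equation: QU adjunction} is a Quillen adjunction (with $Q$ left Quillen) in the sense of \cite{Harper_Hess}, so the forgetful functor $U$ is right Quillen; in particular $U$ preserves fibrations and weak equivalences between all objects (since everything in $\Alg_J$ is assumed to be considered with its model structure where $U$ preserves these). Therefore $UY\rightarrow UY'$ is a weak equivalence in $\AlgO$ and $UY'$ is fibrant in $\AlgO$, so $UY'$ is its own fibrant replacement. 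By Proposition \ref{prop:u_local}, $UY'$ is $\TQ$-local.

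Finally, given any fibrant replacement $X'$ of $UY$ in $\AlgO$, I would apply Proposition \ref{prop: generalized we local} to the weak equivalence $UY\rightarrow UY'$ with chosen fibrant replacements $X'$ and $UY'$, respectively; this immediately yields that $X'$ is $\TQ$-local if and only if $UY'$ is $\TQ$-local, and we have just verified the latter. The statement for arbitrary fibrant replacements then follows either from this same argument applied to each choice, or (more economically) from Proposition \ref{prop: fibrant replacements both local}.

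I do not expect a substantive obstacle here; the only thing to be careful about is the appeal to $U$ preserving weak equivalences and fibrations, which is a standard consequence of the change-of-operads Quillen adjunction developed in \cite{Harper_Hess} and used throughout the preceding sections. Once that is in hand, the proof is a two-step reduction: fibrantly replace upstairs in $\Alg_J$, then transport $\TQ$-locality across the resulting weak equivalence in $\AlgO$.
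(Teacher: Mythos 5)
Your proposal is correct and is essentially the paper's own argument: both pass to a fibrant replacement $Y'$ of $Y$ in $\Alg_J$, use that $U$ preserves fibrations and weak equivalences to see that $UY\rightarrow UY'$ is a weak equivalence with $UY'$ fibrant and hence $\TQ$-local by Proposition \ref{prop:u_local}, and then transport $\TQ$-locality to the given fibrant replacement of $UY$. The only cosmetic difference is that the paper performs the lifting argument directly to produce the comparison map $\xi\colon\widetilde{UY}\rightarrow UY'$ and then cites Proposition \ref{prop: we local}, whereas you cite Proposition \ref{prop: generalized we local}, whose proof is that same lifting argument.
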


\begin{proof}
Note different fibrant replacements of an object are always related by a zig-zag of weak equivalences.  By Proposition \ref{prop: basic props of localization} (2), it suffices to prove one particular fibrant replacement of $UY$ in $\AlgO$ is $\TQ$-local.  Let $Y'$ be a fibrant replacement of $Y$ in $\AlgJ$.  Then $UY'$ is a fibrant replacement of $UY$.  Now the result follows from Proposition \ref{prop: u_local}.
\end{proof}

\begin{prop}
\label{prop: completion is local}
Let $Z$ be a cofibrant $\capO$-algebra. Then the $\TQ$-completion $Z^\wedge_\TQ$ of $Z$ is $\TQ$-local.
\end{prop}

\begin{proof}
We claim that the $\Delta$-shaped diagram $\widetilde{\mathbf{C}(Z)}$ in \eqref{eq: tq_completion} is objectwise $\TQ$-local; i.e., that $\widetilde{\mathbf{C}(Z)^s}$ is $\TQ$-local for each $s\geq 0$.  Then we can conclude the homotopy limit $Z^\wedge_\TQ$ is $\TQ$-local by Proposition \ref{prop: basic props of localization} (3).

To prove the claimed property, consider $Y:=Q(UQ)^s Z\in\AlgJ$, then $UY=(UQ)^{s+1}Z$. Hence, the fibrant replacement $\widetilde{UY} = \widetilde{\mathbf{C}(Z)^s}$ is $\TQ$-local by Proposition \ref{prop:fibrant_replacements_of_UY_are_TQ_local}.  
\end{proof}

We now discuss the connection between $\TQ$-completion and $\TQ$-localization.

\begin{defn}
\label{defn: tq good and tq complete}
Let $X$ be a cofibrant $\capO$-algebra.  We say $X$ is 
\emph{$\TQ$-good} if the $\TQ$-completion map  $X\rightarrow X^{\wedge}_{\TQ}$ is a $\TQ$-equivalence. We say $X$ is 
\emph{$\TQ$-complete} if the $\TQ$-completion map  $X\rightarrow X^{\wedge}_{\TQ}$ is a weak equivalence.
\end{defn}

\begin{prop}
\label{prop: complete iff good and local}
Let $X$ be a cofibrant $\capO$-algebra.  Then $X$ is $\TQ$-complete if and only if (1) $X$ is $\TQ$-good, and (2) the fibrant replacements of $X$ are $\TQ$-local.
\end{prop}

\begin{proof}
The ``if direction'' follows from Proposition \ref{prop: basic props of localization}(1) and \ref{prop: completion is local}.
The ``only if direction'' follows from Definition \ref{defn: tq good and tq complete} and Proposition \ref{prop: basic props of localization}(2), \ref{prop: completion is local}.
\end{proof}

In the Introduction, we mentioned that the ``unit side'' of the Francis-Gaitsgory conjecture amounts to proving for each cofibrant homotopy pro-nilpotent $\capO$-algebra $X$ that $X$ is $\TQ$-complete.  So far, none of the known results could work for all general homotopy pro-nilpotent objects.  In Theorem \ref{thm:main_result}, we can prove all homotopy pro-nilpotent objects have $\TQ$-local fibrant replacements.  By Proposition \ref{prop: complete iff good and local}, the remaining open question is that whether cofibrant homotopy pro-nilpotent $\capO$-algebras are $\TQ$-good.

We can use a similar strategy to show $\TQ|_{\mathrm{Nil}_M}$-completion also results in $\TQ$-local $\capO$-algebras.

\begin{prop}
\label{prop: nil completion local}
Let $X$ be a cofibrant $J_n$-algebra. Then $X^\wedge_{\TQ|_{\mathrm{Nil}_M}}$ constructed from 
$\TQ|_{\mathrm{Nil}_M}$-completion is $\TQ$-local.
\end{prop}

\begin{proof}
This is similar to the proof of Proposition \ref{prop: completion is local}.  The key observation is that the $\Delta$-shaped diagram $\widetilde{V_n\mathbf{N}(X)}$ in \eqref{eq:defn_of_TQ_Nil_M_completion} is objectwise $\TQ$-local.
\end{proof}

Now we can prove nilpotent $\capO$-algebras are $\TQ$-local up to fibrant replacements.

\begin{prop}
\label{prop: nilpotent local}
Let $Z$ be a nilpotent $\capO$-algebra. Then an arbitrary fibrant replacement of $Z$ in $\AlgO$ is $\TQ$-local.
\end{prop}

\begin{proof}
Let $Z$ be $M$-nilpotent for some $M\geq 2$.   Choose a cofibrant $J_n$-algebra $X$ such that $Z$ is weakly equivalent to $V_n X$ as $\capO$-algebras (with $n = M-1$ as in Remark \ref{remark: Jn replace nilpotent}).  It is proved in Ching-Harper \cite[2.12]{Ching_Harper_nilpotent_Whitehead} that the map $V_n X \to X^\wedge_{\TQ|_{\mathrm{Nil}_M}}$ is a weak equivalence.  Since $Z$ is weakly equivalent to $V_n X$ and $V_n X$ is weakly equivalent to the $\TQ$-local $\capO$-algebra $X^\wedge_{\TQ|_{\mathrm{Nil}_M}}$ (Proposition \ref{prop: nil completion local}), the result follows from Proposition \ref{prop: basic props of localization}(2).
\end{proof}

\begin{proof}[Proof of Theorem \ref{thm:main_result}]
By definition, the homotopy pro-nilpotent $\capO$-algebra $X$ is weakly equivalent to the homotopy limit of a small diagram of nilpotent $\capO$-algebras.  By taking objectwise fibrant replacements for the small diagram, $X$ is weakly equivalent to the homotopy limit of a small diagram of $\TQ$-local $\capO$-algebras (Proposition \ref{prop: nilpotent local}).  Now the result follows from Proposition \ref{prop: basic props of localization}(2)(3).
\end{proof}

As a corollary, we obtain the homotopy pro-nilpotent $\TQ$-Whitehead theorem.

\begin{proof}[Proof of Theorem \ref{thm: homotopy pro nilpotent whitehead}]

We take a functorial fibrant replacement $R$ as follows:
\begin{equation}
\xymatrix{
 X \ar[d]_{\sim} \ar[r]^{f} & Y \ar[d]^{\sim}  \\
 RX \ar[r]^{Rf} & RY   \\
}
\end{equation} 
Then $f$ is a weak equivalence (resp. $\TQ$-homology equivalence) if and only if $Rf$ is a weak equivalence (resp. $\TQ$-homology equivalence).  By Theorem \ref{thm:main_result}, $RX, RY$ are $\TQ$-local.  Then Proposition \ref{prop: basic props of localization} (1) completes the proof.
\end{proof}

\begin{rem}
Here in the proof of Theorem \ref{thm: homotopy pro nilpotent whitehead}, the functorial fibrant replacement functor $R$ is taken with respect to the positive flat stable model structure on $\AlgO$ (see Convention \ref{convention: positive flat stable} and Remark \ref{rem: fibrant replacement}).  This is a (full) model structure, hence we do not need to assume $X,Y$ are cofibrant.  On the contrary, additional cofibrancy conditions might be necessary if one works with the $\TQ$-local semi-model structure (see the discussion following Proposition \ref{prop: tq local semi model}).
\end{rem}

Previously, $\TQ$-Whitehead theorems have been established for $0$-connected and nilpotent $\capO$-algebras separately \cite{Ching_Harper_nilpotent_Whitehead,Harper_Hess}.  However, if one considers a map $X \rightarrow Y$ from a $0$-connected $\capO$-algebra to a nilpotent $\capO$-algebra, then none of those $\TQ$-Whitehead theorems could apply.  Now, $\TQ$-Whitehead theorem becomes applicable to $X \rightarrow Y$ since $0$-connected $\capO$-algebras and nilpotent $\capO$-algebras are all homotopy pro-nilpotent \cite[1.12]{Harper_Hess}.

We also want to point out that Goodwillie calculus \cite{Goodwillie_calculus_3, Kuhn_survey} provides a class of naturally occurring examples that are homotopy pro-nilpotent but are, in general, not $0$-connected nor nilpotent.  

As explained in \cite[1.14]{Harper_Hess}, up to weak equivalence, the Taylor tower for a cofibrant $\capO$-algebra $X$ has the following form:
\begin{equation}
\xymatrix{
 & \vdots \ar[d]  \\
  & \tau_3\capO \circ_\capO X \ar[d]  \\
 & \tau_2\capO \circ_\capO X \ar[d]  \\
X \ar[r] \ar[ur] \ar[uur] & \tau_1\capO \circ_\capO X 
}
\end{equation}
where $\tau_k\capO$ is the operad defined in \eqref{equation: tau n O}. By definition, $\tau_k\capO \circ_\capO X$ regarded as an $\capO$-algebra is $(k+1)$-nilpotent.
Therefore, the Taylor tower of the identity functor on $\AlgO$ always converges to homotopy pro-nilpotent $\capO$-algebras. 
Also see \cite{clark_goodwillie, Kuhn, Pereira_spectral_operad, Schonsheck_TQ_Taylor} for related discussions.

\bibliographystyle{plain}
\bibliography{HomotopyProNilpotent}

\end{document}